\newtheorem{theorem}{Theorem}
\newtheorem{lemma}[theorem]{Lemma}
\newtheorem{claim}[theorem]{Claim}
\title{The digrundy number of digraphs}
\author{Gabriela Araujo-Pardo\footnotemark[2] \and Juan Jos{\' e} Montellano-Ballesteros\footnotemark[2] \and Mika Olsen \footnotemark[3] \and Christian Rubio-Montiel\footnotemark[4]}
\begin{document}
\maketitle

\def\thefootnote{\fnsymbol{footnote}}
\footnotetext[2]{Instituto de Matem{\'a}ticas, Universidad Nacional Aut{\'o}noma de M{\' e}xico, Mexico City, Mexico. {\tt [garaujo|juancho]@math.unam.mx}.}
\footnotetext[3]{Departamento de Matem{\' a}ticas Aplicadas y Sistemas, Universidad Aut\'onoma Metropolitana - Cuajimalpa, Mexico City, Mexico. {\tt olsen.mika@cua.uam.mx}.}
\footnotetext[4]{Divisi{\' o}n de Matem{\' a}ticas e Ingenier{\'i}a, FES Acatl{\' a}n, Universidad Nacional Aut{\'o}noma de M{\'e}xico, Naucalpan, Mexico. {\tt christian.rubio@acatlan.unam.mx}.}

\begin{abstract} 
We extend the Grundy number and the ochromatic number,  parameters on graph colorings, to digraph colorings, we call them {\emph{digrundy number}} and {\emph{diochromatic number}}, respectively. First, we  prove that for every digraph the diochromatic number equals the digrundy number (as it happen for graphs). Then, we prove the interpolation property and the Nordhaus-Gaddum relations for the digrundy number,  and improve the Nordhaus-Gaddum relations for the dichromatic and diachromatic numbers bounded previously by the  authors in [Electron. J. Combin. 25 (2018) no. 3, Paper {\#} 3.51, 17 pp.]
\end{abstract}
\textbf{Keywords.} First-Fit number, acyclic coloring, complete coloring, directed graph, Nordhaus-Gaddum inequalities.


\section{Introduction}

It is common that classical results or problems on graph theory provide us interesting questions on digraph theory. An interesting question is what is the natural generalization of the chromatic number in the class of digraphs. In 1982 Neumann-Lara introduced the concept of dichromatic number as a generalization of the chromatic number to the class of digraphs. Specifically, in \cite{MR3000989, MR3322691, MR3979228, MR3692144, MR3705781, MR3593495} the authors study the dichromatic number in order to extend results on the chromatic number of graphs to the class of digraphs.
Furthermore, as an anecdote, M. Skoviera\footnote{Oral communication.},  after a  talk about the diachromatic number  in a conference,   said: ``It looks that dichromatic number is the correct generalization for the chromatic number'', confirming the intuitiveness of the dichromatic number as a generalization of the chromatic number. 

We consider finite digraphs, without loops and symmetric arcs are permitted. A \emph{(vertex) coloring} of a digraph $D$ is \emph{acyclic} if the induced subgraph of each chromatic class is acyclic, i.e., it only admits no directed cycles. The \emph{dichromatic number} $dc(D)$ of $D$ is the smallest $k$ such that $D$ has an acyclic coloring with $k$ colors \cite{MR693366}. This parameter is a generalization of the chromatic number for graphs, see \cite{MR2564801,MR1133813,MR3692144,MR3705781,MR2781992,MR3711038,MR1817491,MR3112565} for old and new results about dichromatic number. For a detailed introduction to digraphs we refer to \cite{MR2472389}.
A coloring of a digraph $D$ is \emph{complete} if for every pair $(i,j)$ of different colors there is at least one arc $(u,v)$ such that $u$ is colored $i$ and $v$ is colored $j$ \cite{MR2998438}. Note that any acyclic coloring of $D$ with $dc(D)$ colors is a complete coloring. The \emph{diachromatic number} $dac(D)$ of a digraph $D$ is the largest number of colors for which there exists an acyclic and complete coloring of $D$. Hence, the dichromatic and diachromatic numbers of a digraph $D$ are, respectively, the smallest and the largest number of colors in a complete acyclic coloring of $D$, see \cite{MR3875016}.

Let $D$ be a digraph of order $n$ whose vertices are listed in some specified order. In a \emph{greedy coloring} of $D$, the vertices are successively colored with positive integers according to an algorithm that assigns to the vertex under consideration the smallest available color. Hence, if the vertices of $D$ are listed in the order $v_1,v_2,\dots,v_n$, then the resulting greedy coloring $\varsigma$ assigns the color $1$ to $v_1$, that is, $\varsigma(v_1)=1$. If $v_1$ and $v_2$ are not a $2$-cycle, then assign $\varsigma(v_2)=1$, else $\varsigma(v_2)=2$. In general, suppose that the first $j$ vertices $v_1,v_2,\dots,v_j$, where $1\leq j < n$, in the sequence have been colored with the colors $1,\dots ,{t-1}$. Let $\{V_i\}_{i=1}^{t-1}$ be the set of chromatic classes.
Consider the vertex $v_{j+1}$, if there exists a chromatic class $V_i$, with the smallest $i$, for which $V_i\cup \{v_{j+1}\}$ is acyclic, then $\varsigma(v_{j+1})=i$, else $\varsigma(v_{j+1})=t$. When the algorithm ends, the vertices of $D$ have been assigned colors from the set $[k]\colon =\{1,2,\dots,k\}$ for some positive integer $k$. Note that any greedy coloring is a complete coloring. The \emph{digrundy number} $dG(D)$ is the largest number of colors in a greedy coloring, see \cite{MR3875016}. 

In this paper, we explore the analogue parameter to the Grundy number for digraphs which we call the digrundy number (we recall that the Grundy number $\Gamma$ is also known as the First-Fit number $\chi_{FF}$). 
The paper is organized as follows: In Section \ref{sec2} we prove the interpolation theorem for digrundy number and give a characterization of  digrundy number and in Section \ref{sec4} we prove the inequalities called the Nordhaus-Gaddum relations for digrundy number and we improve those relations for dichromatic and  diachromatic numbers.

\section{The digrundy and diochromatic numbers}\label{sec2}
Since the digrundy number $dG(D)$ is the largest number of colors in a greedy coloring, it follows that:
$$dc(D)\leq dG(D)\leq dac(D).$$

A \emph{digrundy coloring} of a graph $D$ is an acyclic coloring of $D$ having the property that for every two colors $i$ and $j$ with $i<j$, every vertex colored $j$ has a neighbor colored $i$. It is not hard to see that a coloring $\varsigma$ of a digraph $D$ is a digrundy coloring of $D$ if and only if $\varsigma$ is a greedy coloring of $D$. Therefore, for each vertex $v$ in the chromatic class $j$ and each chromatic class $V_i$, with $i<j$, $N^+(v)\cap V_i\not=\emptyset$ and $N^-(v)\cap V_i\not=\emptyset$ then,
\[dG(D)\leq \min\{\Delta^+(D),\Delta^-(D)\}+1.\]
Moreover, we have the following remark.
\begin{lemma}\label{Lem dc greedy}
 For each digraph $D$, there exists an ordering $\phi$ of the vertices of $D$ such that the digrundy coloring attains the dichromatic number of $D$.  
\end{lemma}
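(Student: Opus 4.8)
The plan is to build the ordering $\phi$ directly from an optimal acyclic coloring. First I would fix an acyclic coloring of $D$ that uses exactly $dc(D)$ colors, with color classes $U_1,U_2,\dots,U_{dc(D)}$, and then define $\phi$ by listing all the vertices of $U_1$ first (in any internal order), then all of $U_2$, and so on up to $U_{dc(D)}$. The goal is then to show that the greedy coloring $\varsigma$ produced along this ordering (which is a digrundy coloring by the characterization stated above) uses no more than $dc(D)$ colors.

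The core of the argument is the invariant that every vertex $v\in U_m$ receives a color $\varsigma(v)\le m$, which I would prove by strong induction on the position of $v$ in $\phi$. The base case is immediate, since the first vertex lies in $U_1$ and gets color $1$. For the inductive step, at the moment $v\in U_m$ is about to be colored I would examine the current color class $V_m$ and show $V_m\subseteq U_m$: any $w\in V_m$ has already been processed, so $w\in U_1\cup\cdots\cup U_m$; writing $w\in U_p$ with $p\le m$, the induction hypothesis gives $\varsigma(w)\le p$, while $w\in V_m$ forces $\varsigma(w)=m$, hence $p=m$ and $w\in U_m$. Therefore $V_m\cup\{v\}\subseteq U_m$ is acyclic, so the greedy rule never needs a color larger than $m$ for $v$, giving $\varsigma(v)\le m$ and closing the induction.

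Once the invariant holds for all vertices, $\varsigma$ uses colors only from $\{1,\dots,dc(D)\}$, hence at most $dc(D)$ colors. On the other hand $\varsigma$ is an acyclic coloring, and $dc(D)$ is by definition the minimum number of colors in any acyclic coloring, so $\varsigma$ uses at least $dc(D)$ colors. Sandwiching the two bounds yields exactly $dc(D)$ colors, as claimed.

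The step I expect to be the main obstacle is establishing the invariant $V_m\subseteq U_m$ cleanly, since it is the only place where the block structure of $\phi$ and the induction hypothesis on already-colored vertices must be combined carefully; everything else reduces to the two-sided inequality $dc(D)\le \varsigma \le dc(D)$.
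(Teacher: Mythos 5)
Your proposal is correct and takes essentially the same approach as the paper: both construct the ordering $\phi$ by listing the classes of an optimal acyclic coloring consecutively and then show that the greedy coloring along this ordering uses exactly $dc(D)$ colors. Your per-vertex invariant $\varsigma(v)\le m$ for $v\in U_m$, combined with the trivial lower bound $dc(D)$, is a cleaner packaging of what the paper establishes by iteratively recoloring the classes $V_{i+1}$ and invoking optimality at each step.
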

\begin{proof}
 Let $D$ be a digraph and consider an acyclic coloring $\varphi:V(D)\to [dc(D)]$. Consider an ordering of $V(G)$ which respects the order of the chromatic classes. Recall that the ordering in each chromatic class is irrelevant since each class is acyclic.  If $\varphi$ is not greedy, then let $i$ be the greatest integer such that $\varphi$ is a greedy coloring restricted to $V_1\cup V_2\cup\dots\cup V_i$. Denote by  $\varphi_{i}$ the greedy coloring of $V_1\cup V_2\cup\dots\cup V_i$.
 Let $\varphi_{i+1}:V_1\cup V_2\cup\dots\cup V_{i+1}\to[i+1]$ be the acyclic coloring such that $\varphi_{i+1}(u)=\varphi_{i}(u)$ for $u\in V_1\cup V_2\cup\dots\cup V_i$ and $\varphi_{i+1}$ recolors the vertices of $V_{i+1}$, respecting the order in $V_{i+1}$, using the greedy coloring. The coloring $\varphi$ is an optimal coloring of $D$, thus, when we recolor the vertices of $V_{i+1}$, there must be some vertex of color $i+1$ and since $V_{i+1}$ is acyclic, $\varphi_{i+1}$ uses exactly $i+1$ colors. Applying this process to each chromatic class $V_j$, with $j>i$ we obtain a greedy coloring of $D$ using $dc(D)$ colors. 
\end{proof}

In \cite{MR3875016}, the authors  proved the interpolation theorem for diachromatic number of a digraph $D$, that is, for every $k$ such that $dc(D) \leq k \leq dac(D)$ there exists an acyclic and complete coloring of $D$ using $k$ colors. In this section, we prove the interpolation theorem for  digrundy number. The version for graphs was proved in \cite{MR539075}, for further information see \cite{MR2450569}.

\begin{theorem}
For a digraph $D$ and an integer $k$ with $dc(D) \leq k \leq dG(D)$, there is a digrundy coloring of $D$ using $k$ colors.
\end{theorem}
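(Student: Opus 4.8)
The plan is to prove the statement by downward induction on the number of colors, reducing everything to a single step: \textbf{from any digrundy coloring of $D$ with $t$ colors, where $dc(D) < t \le dG(D)$, produce a digrundy coloring of $D$ with exactly $t-1$ colors}. Granting this step, one starts from a digrundy coloring with $dG(D)$ colors, which exists by the very definition of $dG(D)$, and applies the step repeatedly; since Lemma~\ref{Lem dc greedy} guarantees that a digrundy coloring with $dc(D)$ colors is the floor, the process runs all the way down and realizes every value $k$ with $dc(D)\le k\le dG(D)$ along the way.

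Before attacking the reduction step I would record two tools. First, the exact greedy characterization: a coloring with classes $V_1,\dots,V_t$ is a digrundy coloring if and only if it is acyclic and, for every vertex $v$ with color $j$ and every $i<j$, the set $V_i\cup\{v\}$ contains a directed cycle. This is precisely the condition under which the first-fit rule is forced to skip color $i$, and it refines the weaker statement $N^+(v)\cap V_i\ne\emptyset\ne N^-(v)\cap V_i$ already noted above (a directed cycle also needs a directed path inside $V_i$ linking an out-neighbor of $v$ back to an in-neighbor). Second, a normalization: any digrundy coloring can be produced by a \emph{class-ordered} ordering, one that lists all vertices of $V_1$ first, then all of $V_2$, and so on. Indeed, running first-fit on such an ordering reproduces the same classes, because each class is acyclic (so every vertex of $V_j$ may take color $j$) while the monochromatic-cycle condition forbids all smaller colors. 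This lets me argue classwise instead of through a fragile global ordering.

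For the reduction step, since $t>dc(D)$ the coloring is not optimal, so there is slack to exploit. The mechanism I would use freezes the bottom classes $V_1,\dots,V_{t-2}$ and only rearranges the top: choose a vertex $v$ of color $t$, and for each $i<t$ take a directed cycle $C_i\subseteq V_i\cup\{v\}$ through $v$ witnessing that color $i$ is blocked for $v$. Using a witnessing cycle for the smallest feasible $i$, I would perform a rotation moving a suitable vertex of $C_i$ up into the top class so as to destroy the obstruction forcing color $t$ on $v$, and then re-run first-fit on the top classes over the frozen bottom. Every vertex of $V_{t-1}$ still forms a monochromatic cycle with each of $V_1,\dots,V_{t-2}$, so no vertex can fall below color $t-1$; the content of the step is to show that after the rotation the top vertices are absorbed using colors $t-1$ and below without recreating color $t$, and that the result still satisfies the monochromatic-cycle characterization, hence is again a digrundy coloring.

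I expect the main obstacle to be \emph{controlling the count exactly}. A naive discrete-continuity argument over orderings does not work: an adjacent transposition in the vertex ordering can cascade and change the number of colors by more than one, which is why I route everything through the classwise characterization instead. Even so, re-running first-fit on a modified configuration tends to collapse several colors at once (greedification of a non-optimal coloring can overshoot straight toward $dc(D)$), whereas the induction needs a decrease of exactly one. The delicate point is therefore to argue that a \emph{minimal} rotation, breaking a single witnessing cycle, lowers the maximum color by precisely one rather than by two or more, eliminating exactly the top class while $V_{t-1},\dots,V_1$ survive. Acyclicity of each modified class and the reappearance of a witnessing directed cycle for every surviving pair of colors are the two invariants I would verify throughout to certify that the output is a genuine digrundy coloring on $t-1$ colors.
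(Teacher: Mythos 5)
Your skeleton (downward induction via a step turning a digrundy coloring with $t$ colors into one with exactly $t-1$ colors) would indeed yield the theorem, but that step is the entire content of the theorem, and your sketch of it contains a genuine gap --- in fact, the mechanism as described is provably impossible. Suppose you really freeze $V_1,\dots,V_{t-2}$ and only repartition $V_{t-1}\cup V_t$. By your own exact greedy characterization, every $w\in V_{t-1}\cup V_t$ forms a directed cycle with each $V_i$, $i\le t-2$, so in any acyclic coloring that keeps $V_1,\dots,V_{t-2}$ as classes, no such $w$ can take a color $\le t-2$; to end with $t-1$ colors, all of $V_{t-1}\cup V_t$ would then have to be a single class. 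But $V_{t-1}\cup V_t$ is never acyclic when $V_t\neq\emptyset$, since any $v\in V_t$ has a witnessing directed cycle inside $V_{t-1}\cup\{v\}$. Hence no $(t-1)$-coloring whatsoever agrees with the original on the bottom $t-2$ classes: any reduction must disturb classes below $V_{t-1}$. Your description is self-contradictory on exactly this point: the rotation moves a vertex of $C_i$ out of $V_i$, which for $i\le t-2$ breaks the freeze; and once a bottom class loses a vertex, your invariant ``every vertex of $V_{t-1}$ still forms a monochromatic cycle with each of $V_1,\dots,V_{t-2}$'' is unjustified, because those cycles may pass through the removed vertex. On top of this, the point you flag yourself as delicate --- that a minimal rotation lowers the count by exactly one --- is precisely what is never argued; note also that breaking one witnessing cycle need not even unblock color $i$ for $v$, since $V_i\cup\{v\}$ may contain many directed cycles.

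The paper's proof shows how to escape this trap: do not fix in advance how many classes are frozen, and do not force the recolored part into a single class. For each $i$ it defines $a_i$ as the minimum number of colors of an acyclic coloring agreeing with the digrundy coloring on $V_1\cup\dots\cup V_{i-1}$, proves $a_i=(i-1)+dc(D_i)$ where $D_i$ is induced by the top classes, deduces $a_i\le a_{i+1}\le a_i+1$ from $dc(D_i)-1\le dc(D_{i+1})\le dc(D_i)$, and then uses a discrete intermediate-value argument between $a_1=dc(D)$ and $a_{dG(D)+1}=dG(D)$ to hit every intermediate $k$ as some $a_i$; Lemma~\ref{Lem dc greedy}, applied to $D_i$, then converts the witnessing coloring into a greedy (digrundy) one, coloring the top part with $dc(D_i)=k-(i-1)$ colors rather than one. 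If you want to salvage a step-down induction, you must likewise let the cut between ``frozen'' and ``recolored'' move adaptively --- at which point you have essentially reconstructed the paper's argument.
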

\begin{proof}
Let $\varsigma$ be a digrundy coloring of $D$ using the set of colors $[dG(D)]$, let $\phi$ be the corresponding vertex ordering  and let $V_1,V_2,\dots,V_{dG(D)}$ be the color classes of $\varsigma$, where $V_i$ consists of the vertices colored $i$ by $\varsigma$ for $i\in [dG(D)]	$. For each integer $i$ with $1\leq i \leq dG(D)+1$, let $a_i$ be the smallest number of colors in an acyclic coloring of $D$ which coincides with $\varsigma$ for each vertex belonging to $V_1\cup V_2\cup\dots\cup V_{i-1}$. Observe that  $a_{dG(D)+1} = dG(D)$. Furthermore, for each integer $i$ with $1\leq i \leq dG(D)$, let $D_i$ be the subgraph of $D$ induced by $V_i\cup V_{i+1}\cup\dots\cup V_{dG(D)}$. Suppose that $a_1=dc(D)$. Since each vertex $x$ in $V_i\cup V_{i+1}\cup\dots\cup V_{dG(D)}$ is in at least one directed cycle with the others vertices 
in each of the color classes $V_1,V_2,\dots, V_{i-1}$, it follows that in every coloring of $D$ that coincides with $\varsigma$ on $V_1\cup V_2\cup\dots\cup V_{i-1}$, none of the colors $1,2,\dots, i-1$ can be used for a vertex of $D_i$ and so 
\begin{equation} \label{Eq1}
a_i = (i-1) + dc(D_i).
\end{equation}
Since $D_{i+1}$ is a subgraph of $D_i$, it follows that $dc(D_{i+1}) \leq dc(D_i)$. Furthermore, a coloring of $D_i$ using $dc(D_i)$ can be obtained from a coloring of $D_{i+1}$ using $dc(D_{i+1})$ by assigning all of the vertices in $V_i$ the same color but one that is different from the colors used in the coloring of $D_{i+1}$ using $dc(D_{i+1})$. Thus
\begin{equation} \label{Eq2}
dc(D_i)-1\leq dc(D_{i+1})\leq dc(D_i).
\end{equation}
By Equations (\ref{Eq1}) and (\ref{Eq2}), 
\[a_i=(i-1)+dc(D_i)=i+(dc(D_i)-1)\leq i+dc(D_{i+1})\]
\[\leq i+dc(D_i)=1+(i-1)+dc(D_i)=1+a_i.\]
Therefore, $a_i\leq i+dc(D_{i+1})\leq 1+a_i$. Since $a_{i+1}=i+dc(D_{i+1})$, it follows that $$a_i\leq a_{i+1} \leq 1+a_i.$$ On the other hand, $a_1=dc(D)$ and $a_{dG(D)+1}=dG(D)$. Thus, for each integer $k$ with $dc(D)\leq k \leq dG(D)$, there is an integer $i$ with $1\leq i \leq dG(D)+1$ such that $a_i=k$.
By Lemma \ref{Lem dc greedy}, we may assume that $dc(D)<k<dG(D)$. Thus there exists a coloring $\varsigma'$ of $D$ using $k$ colors such that $\varsigma'$ coincides with $\varsigma$ for each vertex belonging to $V_1\cup V_2 \cup \dots \cup V_{i-1}$. 

By Lemma \ref{Lem dc greedy}, let $\phi''$ be a vertex ordering such that $\phi$ and $\phi''$ coincides for $v\in V_1\cup V_2\cup\dots\cup V_{i-1}$ and such that  when we apply the greedy algorithm on $D_i$, we obtain $dc(D_i)$ colors.  

Let $\varsigma''$ be the greedy coloring with respect to $\phi''$. Suppose that $\varsigma''$ is an coloring of $D$ using $l$ colors. Then $\varsigma''$ is a digrundy coloring of $D$ using $l$ colors such that $\varsigma''$ coincides with $\varsigma'$ and $\varsigma$ on all of the vertices in $V_1\cup V_2\cup \dots \cup V_{i-1}$ and $\varsigma''$ assigns to each vertex of $D$ a color not greater than the color assigned to the vertex by $\varsigma'$. Therefore, $l\leq k$. On the other hand, by the definition of $a_i$, the coloring $\varsigma''$ cannot use less than $k=a_i$ colors, which implies that $l=k$ and so $\varsigma''$ is digrundy coloring of $D$ using $k$ colors.
\end{proof}


In 1982 G. Simmons \cite{MR726050} introduced a new type of coloring of a graph $G$ based on orderings of the vertices of $G$, which is similar to but not identical to greedy colorings of $G$. We extend this definition to digraphs using acyclic colorings.

Let $\phi\colon v_1,v_2,\dots,v_n$ be an ordering of the vertices of a digraph $D$. An acyclic coloring $\varsigma\colon V(D)\rightarrow \mathbb{N}$ of $D$ is a \emph{parsimonious $\phi$-coloring} of $D$ if the vertices of $D$ are colored in the order $\phi$, beginning with $\varsigma(v_1)=1$, such that each vertex $v_{i+1}$ $(1\leq i\leq n-1)$ must be assigned a color that has been used to color one or more of the vertices $v_1,v_2,\dots,v_i$ if possible. If $v_{i+1}$ can be assigned more than one color, then a color must be selected that results in using the fewest number of colors needed to color $D$. If $v_{i+1}$ form a directed cycle to every currently chromatic class, then $\varsigma (v_{i+1})$ is defined as the smallest positive integer not yet used. The \emph{parsimonious $\phi$-coloring number} $dc_\phi (D)$ of $D$ is the minimum number of colors in a parsimonious $\phi$-coloring of $D$. The maximum value of $dc_\phi (D)$ over all orderings $\phi$ of the vertices of $D$ is the \emph{ordered dichromatic number} or, more simply, the \emph{diochromatic number} of $D$, which is denoted by $dc^o (D)$.

P. Erd{\H o}s, W. Hare, S. Hedetniemi, and R. Laskar \cite{MR889347} showed that the ochromatic number of every graph always equals its Grundy number. This is also true for these generalizations for digraphs.

\begin{theorem}
For every digraph $D$, $dG(D)=dc^o(D)$.
\end{theorem}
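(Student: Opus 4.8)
The plan is to prove the two inequalities $dc^o(D)\le dG(D)$ and $dG(D)\le dc^o(D)$ separately, following the pattern of the graph equality $\chi^o=\Gamma$. For the first (routine) inequality I fix an arbitrary ordering $\phi$ and run the greedy algorithm on $\phi$, obtaining a greedy coloring $g_\phi$ with, say, $t$ colors, so that $t\le dG(D)$ by definition. The point is that the parsimonious rule reuses an already used color whenever some current class can absorb the vertex and, among the feasible reusals, selects one yielding the fewest colors overall; hence $dc_\phi(D)$ equals the minimum number of colors taken over \emph{all} colorings that follow $\phi$ and reuse a color whenever possible. Since $g_\phi$ is exactly such a coloring (it reuses the smallest feasible color and opens a new one only when the vertex forms a directed cycle with every current class), we get $dc_\phi(D)\le t\le dG(D)$, and taking the maximum over $\phi$ gives $dc^o(D)\le dG(D)$.

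The substantial direction is $dG(D)\le dc^o(D)$: I must produce a single ordering that forces \emph{every} parsimonious coloring to use at least $dG(D)$ colors. I start from a greedy coloring $\varsigma$ attaining $dG(D)$ colors, with classes $V_1,\dots,V_m$ and $m=dG(D)$. The crucial structural fact is that, because $\varsigma$ is greedy, a vertex $v\in V_j$ was rejected from each lower class, so for every $i<j$ the set $V_i\cup\{v\}$ is \emph{not} acyclic, i.e. $v$ lies on a directed cycle through $V_i$. I stress that I need this full cycle condition and not merely the weaker statement that $v$ has an in- and an out-neighbor in $V_i$; the cycle condition is precisely what greedy rejection supplies, and it survives enlarging $V_i$ to the whole class. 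I then let $\phi$ list the vertices class by class, all of $V_1$ first, then all of $V_2$, and so on up to $V_m$, in any internal order.

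Next I would show by induction on $i$ that every parsimonious $\phi$-coloring $\tau$ assigns color $\ell$ to precisely $V_\ell$ for each $\ell\le i$ once it has processed $V_1\cup\dots\cup V_i$. For the base case $V_1$ is acyclic, so color $1$ remains feasible throughout $V_1$ and $\tau$ must reuse it on all of $V_1$. For the inductive step, when $\tau$ reaches a vertex $v\in V_{i+1}$ the classes $V_1,\dots,V_i$ are already complete and each forms a directed cycle with $v$, so colors $1,\dots,i$ are all infeasible; since $V_{i+1}$ is acyclic, color $i+1$ is feasible, so $v$ is forced either to open color $i+1$ (the first such $v$) or to reuse it, and no smaller or larger color is available. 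Thus $\tau$ is forced to use exactly $m$ colors, giving $dc_\phi(D)=m$ and hence $dc^o(D)\ge dG(D)$; combined with the first inequality this yields the claimed equality.

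The main obstacle is the forcing step of the second direction: one must be certain that the completed lower classes genuinely block every smaller color at each vertex of $V_{i+1}$. This is exactly where the cycle characterization of greedy colorings is indispensable, and it is the reason the class-by-class ordering works. A naive ordering that interleaves the classes can let the parsimonious rule place some vertex of a low class into a higher color and thereby collapse colors, using fewer than $m$; grouping each class contiguously is what prevents this and locks $\tau$ onto the classes of $\varsigma$.
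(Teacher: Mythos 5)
Your proof is correct and follows essentially the same route as the paper: the easy direction bounds $dc_\phi(D)$ by the greedy coloring (a valid ``reuse-when-possible'' coloring), and the hard direction uses the class-by-class ordering from an optimal greedy coloring, with the directed-cycle rejection property forcing every parsimonious coloring to reproduce the classes $V_1,\dots,V_m$ by induction. Your explicit remark that the cycle condition survives enlarging $V_i$ to the full class is a point the paper uses implicitly, so nothing further is needed.
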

\begin{proof}
In order to show that $dc^o(D)\leq dG(D)$, let $\phi\colon v_1,v_2,\dots,v_n$ be an ordering of the vertices of $D$ such that $dc_\phi (D)=dc^o(D)$. Consider the parsimonious coloring using $dc^o(D)$ colors, obtained using a greedy coloring, that is, whenever there is a choice of a color for a vertex, the smallest possible color is chosen. Suppose that this results in an coloring using $l$ colors of $D$. Then $dc_\phi (D)\leq l$. Furthermore, this coloring using $l$ colors is a Grundy coloring using $l$ colors. Therefore, $dG(D)\geq l$ and so
\[dc^o(D)=dc_\phi (D) \leq l \leq dG(D),\]
producing the desired inequality.

We show that $dc^o(D)\geq dG(D)$. Let $dG(D)=k$. Consider a Grundy coloring of the vertices of $D$, using the colors $1,2,\dots,k$, and let $V_1,V_2,\dots,V_k$ denote the chromatic classes $(1\leq i \leq k)$. Let $\phi\colon v_1,v_2,\dots,v_n$ be any ordering of $D$ in which the vertices of $V_1$ are listed first in some order, the vertices of $V_2$ are listed next in some order, and so on until finally listing the vertices of $V_k$ in some order. We now compute $dc_\phi (D)$. Assign $v_1$ the color $1$. Since $V_1$ is acyclic, every vertex in $\phi$ that belongs to $V_1$ is not in a monochromatic directed cycle using only vertices of $V_1$, therefore every vertex in $V_1$ must be colored 1 as well. Assume, for an integer $r$ with $1\leq r < k$, that the parsimonious coloring has assigned the color $i$ to every vertex in $V_i$ for $1\leq i \leq r$. Now, consider the vertices in $\phi$ that belong to $V_{r+1}$. Let $v_a$ be the first vertex appearing in $\phi$ that belongs to $V_{r+1}$. Since $v_a$ is in a directed cycle for each $V_i$ for every $i$ with $1\leq i \leq r$, it follows that $v_a$ cannot be colored any of the colors $1,2,\dots ,r$. Hence, the new color $r+1$ is assigned to $v_a$. Now if $v_b$ is any vertex belonging to $V_{r+1}$ such that $b>a$, then $v_b$ cannot be colored any of the colors $1,2,\dots,r$ since $v_b$ is in a directed cycle for each $V_i$ for $1\leq i \leq r$. However, since $v_b$ is not in a directed cycle for $V_{r+1}$, it follows that $v_b$ must be colored $r+1$. By mathematical induction, $dc_\phi (D) = k$. Thus, $dc^o (D)\geq dG(D)$, and the result follows.

\end{proof}


\section{On the Nordhaus-Gaddum relations}\label{sec4}

The Nordhaus-Gaddum inequality \cite{MR0078685} states that for every graph $G$ of order $n$\[\chi(G)+\chi(G^{c})\leq n+1.\]
These relations were extended to the pseudoachromatic numbers \cite{MR0256930} getting that for every graph $G$ of order $n$ \[\alpha(G)+\chi(G^{c})\leq n+1\qquad \text{ and }\qquad \alpha(G)+\alpha(G^{c})\leq\psi(G)+\psi(G^{c})\leq \left\lceil \frac{4n}{3}\right\rceil. \]
 And for the Grundy number \cite{MR2432888} for every graph $G$ of order $n\geq 10$
\[\Gamma(G)+\Gamma(G^{c})\leq \left\lfloor \frac{5n+2}{4}\right\rfloor . \]
For digraphs, there exists the following results \cite{MR3875016}.

If $D$ is a digraph of order $n$, then 
\begin{equation}\label{Eq3}
dc(D)+dc(D^{c})\leq \left\lceil \frac{4n}{3}\right\rceil\qquad \text{ and }\qquad dac(D)+dac(D^{c})\leq \left\lceil \frac{3n}{2}\right\rceil .
\end{equation}

In this section, we improve the upper bounds of Equation \ref{Eq3} and we prove a similar result for the digrundy number.

\begin{theorem}
If $D$ is a digraph of order $n$, then
$dc(D)+dc(D^{c})\leq n+1$.
\end{theorem}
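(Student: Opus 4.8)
The plan is to argue by induction on the order $n$, mirroring the classical proof of $\chi(G)+\chi(G^c)\le n+1$ but replacing its adjacency bookkeeping with directed-cycle bookkeeping. The base case $n=1$ is immediate, since then $dc(D)=dc(D^c)=1$ and the sum is $2=n+1$. For the inductive step I would fix an arbitrary vertex $v$ and compare $dc(D)$ with $dc(D-v)$, and $dc(D^c)$ with $dc(D^c-v)$. Two easy facts drive the step: complementation commutes with vertex deletion, so $(D-v)^c=D^c-v$; and deleting a vertex changes the dichromatic number by at most one, i.e. $dc(D-v)\le dc(D)\le dc(D-v)+1$. The lower inequality holds because the restriction of any acyclic coloring of $D$ is an acyclic coloring of $D-v$, and the upper one because an optimal acyclic coloring of $D-v$ together with a fresh color for $v$ is an acyclic coloring of $D$.

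The first case is when at least one of the two parameters does not grow, say $dc(D)=dc(D-v)$. Applying the induction hypothesis to $D-v$, which has order $n-1$, gives $dc(D-v)+dc(D^c-v)\le n$; combining this with $dc(D^c)\le dc(D^c-v)+1$ yields $dc(D)+dc(D^c)\le n+1$ at once. The symmetric subcase $dc(D^c)=dc(D^c-v)$ is identical.

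The heart of the argument is the remaining case, in which both $dc(D)=dc(D-v)+1$ and $dc(D^c)=dc(D^c-v)+1$. Here I would fix an optimal acyclic coloring $V_1,\dots,V_a$ of $D-v$ with $a=dc(D-v)$. Since no $a$-coloring of $D$ is acyclic, assigning $v$ any color $i\in[a]$ must fail, so each $V_i\cup\{v\}$ contains a directed cycle; as $V_i$ itself is acyclic, that cycle passes through $v$, forcing $v$ to have an out-neighbor (and an in-neighbor) inside $V_i$. Because the classes are pairwise disjoint, this exhibits $a$ distinct out-neighbors of $v$, so the out-degree satisfies $d^+_D(v)\ge dc(D-v)$. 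The same reasoning in $D^c$ gives $d^+_{D^c}(v)\ge dc(D^c-v)$. Since every ordered pair $(v,w)$ with $w\neq v$ is an arc of exactly one of $D$ and $D^c$, we have $d^+_D(v)+d^+_{D^c}(v)=n-1$, whence $dc(D-v)+dc(D^c-v)\le n-1$ and therefore $dc(D)+dc(D^c)=dc(D-v)+dc(D^c-v)+2\le n+1$.

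The step I expect to be the main obstacle, and the only genuinely directed part of the argument, is justifying $d^+_D(v)\ge dc(D-v)$ in this last case. In the undirected proof one simply notes that $v$ is adjacent to all color classes; here one must instead recognize that the obstruction to absorbing $v$ into a class is a directed cycle through $v$, which bounds the out-degree (equivalently the in-degree) rather than the total degree. This is exactly the right quantity, because it is the out-degrees, not the degrees, of $v$ in $D$ and in $D^c$ that sum to $n-1$; recording the out-neighbor form of the cycle condition is precisely what makes the final count close.
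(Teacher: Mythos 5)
Your proof is correct and follows essentially the same route as the paper: induction on $n$ via vertex deletion, with the easy case when one parameter does not grow and the key case when both grow, resolved by noting that each color class of an optimal coloring of $D-v$ must close a directed cycle through $v$. If anything, your bookkeeping is slightly cleaner than the paper's: you count only out-neighbors and use $d^+_D(v)+d^+_{D^c}(v)=n-1$, whereas the paper sums in- and out-degrees to get $d_D(x)+d_{D^c}(x)=2(n-1)$, arriving at the same conclusion.
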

\begin{proof}
The proof is by induction on $n$. The case of $D=K_1$ is trivial. Suppose that for each digraph $F$ of order at most $n-1\geq 1$, $dc(F)+dc(F^{c})\leq n$.
Let $D$ be a digraph of order $n$ and let $x\in V(D)$. Take an acyclic and complete coloring using $k$ colors of $D-x$ for $k=dc(D-x)$. Therefore, $dc(D)$ is at most $dc(D-x)+1$. Similarly, $dc(D^c)\leq dc(D^c-x)+1$. Hence, by induction hypothesis,
\[dc(D)+dc(D^c)-2\leq dc(D-x)+dc(D^c-x)\leq n\]
and  $dc(D)+dc(D^c)\leq n+2.$

Supose that $dc(D)<dc(D-x)+1$ or $dc(D^c)<dc(D^c-x)+1$, then 
\[dc(D)+dc(D^c)\leq n+1.\]
Assume that $dc(D)=dc(D-x)+1$ and $dc(D^c)=dc(D^c-x)+1$, which means that for each chromatic class $X$ of $D$ and $D^c$, $X\cup \{x\}$ contains a cycle. Then $2dc(D)\leq d_D(x)=d^+_D(x)+d^-_D(x)$ and $2dc(D^c)\leq d_{D^c}(x)=d^+_{D^c}(x)+d^-_{D^c}(x)$ then
\[2dc(D)+2dc(D^c)\leq d_D(x)+d_{D^c}(x)=2(n-1)\]
and the result follows.
\end{proof}

\begin{theorem}\label{dac}
If $D$ is a digraph of order $n$, then
\[dac(D)+dac(D^{c})\leq \left\lceil \frac{4n}{3}\right\rceil.\]
\end{theorem}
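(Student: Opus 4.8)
The plan is to work directly with optimal complete acyclic colorings of $D$ and $D^c$ and to bound $dac(D)+dac(D^c)$ by a weighted vertex count. Fix a complete acyclic coloring $\varphi$ of $D$ attaining $a:=dac(D)$ colors and a complete acyclic coloring $\psi$ of $D^c$ attaining $b:=dac(D^c)$ colors; for a vertex $v$ let $c(v)$ and $c'(v)$ be the sizes of the color classes containing $v$ under $\varphi$ and $\psi$, respectively. Distributing to each vertex of a class the weight one over the class size gives the identity
\[a+b=\sum_{v\in V(D)}\Big(\tfrac{1}{c(v)}+\tfrac{1}{c'(v)}\Big).\]
So it suffices to show that the average of $f(v):=\tfrac{1}{c(v)}+\tfrac{1}{c'(v)}$ over the $n$ vertices is at most $4/3$, up to a ceiling correction. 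Since $f(v)>4/3$ only when $v$ is a singleton in one coloring and lies in a class of size $1$ or $2$ in the other, and since $f(v)\le 3/2$ unless $v$ is a singleton in both colorings, this identity already reproduces the weaker bound $\lceil 3n/2\rceil$; the whole difficulty is to squeeze out the extra factor by controlling the small classes.

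The structural input I would isolate first concerns singletons. Let $X$ be the set of vertices forming singleton classes of $\varphi$. Completeness of $\varphi$ forces both arcs between any two of them to lie in $D$, so $X$ induces no arc in $D^c$; symmetrically the set $Y$ of singletons of $\psi$ induces no arc in $D$. Because a complete coloring cannot have two entire classes inside an arcless set, at most one class of $\psi$ is contained in $X$, and at most one class of $\varphi$ is contained in $Y$. In particular at most one vertex is a singleton in both colorings, and if such a vertex $z$ exists, then $\{z\}$ is already the unique $\psi$-class inside $X$ and the unique $\varphi$-class inside $Y$; hence in that case no size-$2$ class of $\psi$ lies inside $X$ and no size-$2$ class of $\varphi$ lies inside $Y$.

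With these facts in hand I would run a discharging argument on the charges $f(v)$. Every vertex $v$ of type $(c(v),c'(v))=(1,2)$ whose $\psi$-partner $w$ lies outside $X$ sends $1/6$ to $w$, and symmetrically every vertex of type $(2,1)$ whose $\varphi$-partner lies outside $Y$ sends $1/6$ to that partner. A vertex that discharges ends with charge exactly $4/3$; a recipient $w$ necessarily has $c'(w)=2$ when it receives via the first rule and $c(w)=2$ when it receives via the second, so it gains $1/3$ only if it is of type $(2,2)$ with initial charge $1$, and therefore every recipient also ends at most $4/3$. All remaining vertices already satisfy $f(v)\le 4/3$. The only vertices still carrying excess are the at most one $(1,1)$ vertex (excess $2/3$) together with the $(1,2)$- and $(2,1)$-vertices whose partners lie inside $X$, respectively $Y$, of which there are at most two of each kind (excess $1/6$ each).

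I expect the main obstacle to be exactly this bookkeeping of the exceptional vertices, where two subtleties interact. First, a single $(2,2)$ vertex may be asked to absorb charge from both a $(1,2)$ and a $(2,1)$ vertex simultaneously, and one must verify that its deficit $1/3$ is \emph{precisely} enough to cover the two $1/6$ contributions, so that the relevant groups $\{v,w,u\}$ average exactly $4/3$ rather than overflowing. Second, the exceptional excesses must not accumulate; here the last part of the structural step is decisive, since the presence of a $(1,1)$ vertex rules out any size-$2$ class of $\psi$ inside $X$ or of $\varphi$ inside $Y$, and hence the total exceptional excess is at most $2/3$ in every case. This yields $a+b\le (4n+2)/3$. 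I would finish by noting that $\lfloor(4n+2)/3\rfloor=\lceil 4n/3\rceil$ for every residue of $n$ modulo $3$, so that the integrality of $a+b$ forces $a+b\le\lceil 4n/3\rceil$, completing the proof.
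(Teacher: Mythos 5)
Your proposal is correct, and it reaches the paper's bound $(4n+2)/3$ by a genuinely different route. The paper's proof is a short global argument: assuming WLOG that $dac(D)\geq\tfrac{x}{2}$ where $x=dac(D)+dac(D^c)$, it bounds $dac(D)\leq\omega+\tfrac{n-\omega}{2}$ (since the singleton classes span a complete symmetric subdigraph, of order at most the clique number $\omega$) and $dac(D^c)\leq n-\omega+1$ (since a clique of $D$ is independent in $D^c$, so at most one class of $D^c$ can live entirely inside it), then combines the two inequalities algebraically to get $\tfrac{3x}{2}\leq 2n+1$. You use exactly the same two structural facts — singletons of one coloring form a clique, hence an independent set in the complement, and completeness forbids two classes inside an independent set — but you deploy them locally, through the identity $a+b=\sum_v\bigl(\tfrac{1}{c(v)}+\tfrac{1}{c'(v)}\bigr)$ and a discharging scheme on vertex types. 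Your verification is sound: senders of type $(1,2)$ or $(2,1)$ end at exactly $4/3$; a recipient has both relevant class sizes at least $2$, receives at most $\tfrac16+\tfrac16$ (once per rule, since each vertex has only one class in each coloring), and so ends at most $4/3$; senders can never be recipients; and the exceptional excess is at most $2/3$ in both cases of your dichotomy, since a doubly-singleton vertex $z$ makes $\{z\}$ the unique class inside $X$ (resp. $Y$), excluding the size-$2$ exceptional classes. What each approach buys: the paper's argument is shorter and requires no case analysis, but breaks the symmetry between $D$ and $D^c$ with a WLOG step and funnels everything through the single parameter $\omega$; your discharging treats $D$ and $D^c$ symmetrically, pinpoints exactly which vertices carry excess charge, and is in the same spirit as the F\"uredi--Gy\'arf\'as--S\'ark\"ozy--Selkow-style counting that the paper itself uses for the digrundy Nordhaus--Gaddum theorem, so it would integrate naturally with that later proof. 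Both arguments end with the same integrality observation $\bigl\lfloor\tfrac{4n+2}{3}\bigr\rfloor=\bigl\lceil\tfrac{4n}{3}\bigr\rceil$.
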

\begin{proof}
Let $dac(D)+dac(D^{c})=x$. Without loss of generality, $\frac{x}{2}\leq dac(D)$, that is, $dac(D)=\frac{x}{2}+\delta$ for some $0\leq\delta\leq\frac{x}{2}$. Let  $\omega$ denote the maximum order of a complete subdigraph (a complete symmetric digraph) in $D$. Since the set of singular chromatic classes induces a complete subdigraph in $D$ it follows that
\[dac(D)=\frac{x}{2}+\delta\leq \omega+\frac{n-\omega}{2}\] 
thus $x+2\delta\leq 2\omega+n-\omega$ and $x-n+2\delta\leq \omega.$

On the other hand, $dac(D^c)\leq n-\omega+1$ because each complete subdigraph of $D$ is an independent set of vertices in  $D^c$. Hence \[dac(D^c)\leq n+1-x+n-2\delta=2n+1-x-2\delta,\]
 \[x=dac(D)+dac(D^{c})\leq \frac{x}{2}+\delta + 2n+1-x-2\delta=-\frac{x}{2}-\delta + 2n+1\]
and $\frac{3x}{2}\leq 2n+1-\delta.$ Finally, $dac(D)+dac(D^{c})\leq \left\lfloor\frac{4n+2}{3}\right\rfloor$ and the result follows.
\end{proof}

Finally, we prove the Nordhaus-Gaddum for the digrundy number.

\begin{theorem}
If $D$ is a digraph of order $n$, then \[dG(D)+dG(D^{c})\leq\begin{cases}
\begin{array}{c}
n+1\\
n+2\\
12\\
\left\lfloor \frac{5n+2}{4}\right\rfloor 
\end{array} & \begin{array}{c}
\textrm{ if }n\leq4;\\
\textrm{ if }n\leq8;\\
\textrm{ if }n=9;\\
\textrm{ if }n\geq10.
\end{array}\end{cases}\]
\end{theorem}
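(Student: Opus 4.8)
The engine I would build first is a clean per-vertex inequality. Fix digrundy colorings of $D$ and of $D^c$ and let $f(v)$ and $g(v)$ denote the color of $v$ in each. The observation recorded just before Lemma~\ref{Lem dc greedy} says that a vertex of color $j$ has both an out-neighbor and an in-neighbor in every lower class, so $f(v)-1\le d^+_D(v)$, $f(v)-1\le d^-_D(v)$, and likewise $g(v)-1\le d^+_{D^c}(v)$, $g(v)-1\le d^-_{D^c}(v)$. Using $\min\{x,y\}\le\tfrac12(x+y)$ together with $d^+_D(v)+d^+_{D^c}(v)=n-1$ and $d^-_D(v)+d^-_{D^c}(v)=n-1$ gives $f(v)+g(v)-2\le \tfrac12\big[(d^+_D(v)+d^+_{D^c}(v))+(d^-_D(v)+d^-_{D^c}(v))\big]=n-1$, so $f(v)+g(v)\le n+1$ for every vertex $v$. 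In particular, if a single vertex simultaneously realizes $a:=dG(D)$ and $b:=dG(D^c)$, then $dG(D)+dG(D^c)\le n+1$, which already dominates every claimed bound; the whole problem is therefore to control the case where the two maxima are attained at different vertices. Note also that this inequality transfers verbatim from the graph setting, which is what makes a digraph analogue of \cite{MR2432888} possible at all, even though $U(D)$ and $U(D^c)$ are not complementary.

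With the per-vertex bound in hand, I would follow the color-class counting of \cite{MR2432888}. Let $u$ with $f(u)=a$ and $w$ with $g(w)=b$, so $g(u)\le n+1-a$ and $f(w)\le n+1-b$. The structural input is that the top classes of each coloring force many high-degree (hence, in the complement, low-degree) vertices: for each $i$, a vertex of $D$-color $i$ has out- and in-neighbors in all of the $i-1$ lower $D$-classes, and symmetrically in $D^c$. The plan is to sort the $n$ vertices by the pair $(f(v),g(v))$, bound how many vertices can carry a large value of $f$ or of $g$, and track the unavoidable overlap between $u$'s $D$-neighborhood and $w$'s $D^c$-neighborhood. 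Optimizing the resulting system of linear inequalities over the class sizes is exactly the computation that produces the constant $5/4$ in the graph case, and I would expect it to yield $dG(D)+dG(D^c)\le\lfloor(5n+2)/4\rfloor$ once $n$ is large enough. Tightness is inherited from symmetric digraphs: for $D=\overleftrightarrow{G}$ one has $dG(\overleftrightarrow{G})=\Gamma(G)$ and $\overleftrightarrow{G}^c=\overleftrightarrow{G^c}$, so the extremal graphs of \cite{MR2432888} certify that $5/4$ cannot be lowered.

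I expect the genuine obstacles to be two. First, pinning down the exact constant and the threshold $n\ge 10$: the averaging $\min\le\tfrac12(x+y)$ is wasteful for non-symmetric digraphs, so to match the sharp symmetric bound the counting must be organized so that any saving a single non-digon arc gives in one direction is paid back in the other, i.e.\ the directed count must be shown never to exceed the symmetric one. Second, the small orders require separate, essentially finite, verification: for $n\le 8$ one has $\lfloor(5n+2)/4\rfloor\le n+2$, and the bounds $n+1$ $(n\le4)$ and $n+2$ $(5\le n\le8)$ should drop out of the per-vertex inequality together with $dG(D)\le\min\{\Delta^+(D),\Delta^-(D)\}+1$, while the anomalous value $12$ at $n=9$ exceeds both $n+2$ and $\lfloor(5n+2)/4\rfloor=11$ and hence must come from exhibiting (and bounding) one specific $9$-vertex configuration by hand. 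This last exceptional case is where I would expect the most delicate bookkeeping.
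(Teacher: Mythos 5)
Your opening per-vertex inequality is correct, and it does appear in the paper's proof --- but only as the easy sub-case. The paper works with the two greedy partitions $\mathcal{A}$, $\mathcal{B}$ and the sets $K$, $L$ of vertices forming singleton classes of $D$ and $D^c$ respectively; $K$ spans a complete symmetric subdigraph and $L$ an independent set, so $|K\cap L|\le 1$, and when $|K\cap L|=1$ the paper applies exactly your degree argument at that common vertex to get $n+1$. Everything else --- the case $K\cap L=\emptyset$, which you yourself identify as ``the whole problem'' --- is left in your proposal as ``follow the color-class counting of \cite{MR2432888} and expect it to yield $\lfloor(5n+2)/4\rfloor$.'' That is precisely where the paper has to do genuinely new, directed work, and it is not a routine transfer: one must classify the $2$-element classes by how they meet $L$ (resp.\ $K$), introduce the sets $S$ and $T$ of partners of those classes, and prove that $|S\cap T|\le 1$ (Claim \ref{claim6}) and the structural restrictions of Claim \ref{claim7}. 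The proof of Claim \ref{claim6} hinges on the order of the classes in the greedy partitions and on digons being forced between later singletons and earlier classes; arcs and non-arcs in $D$ versus $D^c$ must be tracked simultaneously, and nothing in the graph case can simply be cited for this. Only after these claims do the five counting identities (\ref{eq4})--(\ref{eq8}) sum to $4\bigl(dG(D)+dG(D^c)\bigr)\le 5n+s$ with $s\le 5$, and a further case analysis ($\gamma=1$ forces $\alpha=\beta=0$; $\alpha=\beta=1$ with small excess forces $dG(D)+dG(D^c)\le n+3$) pins $s\le 2$ for $n\ge 10$. None of this is in your proposal, so it is an outline with the central argument missing.

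Your plan for the small orders also would not work as stated. The per-vertex bound $f(v)+g(v)\le n+1$ controls both colors at a \emph{single} vertex, whereas $dG(D)$ and $dG(D^c)$ are maxima that may be attained at different vertices, so it cannot by itself give $dG(D)+dG(D^c)\le n+1$ for $n\le 4$, and combining it with $dG(D)\le\min\{\Delta^+(D),\Delta^-(D)\}+1$ is an unsubstantiated ``should drop out.'' Moreover, you misread where the value $12$ at $n=9$ comes from: it is not an exceptional configuration to be exhibited and bounded by hand, but simply the general bound $dG(D)+dG(D^c)\le\frac{5n+4}{4}$ (i.e.\ $s\le 4$) evaluated at $n=9$; the improvement to $n+2$ for $n\le 8$ then comes from a counting argument over seven pairwise disjoint vertex sets showing that $dG(D)+dG(D^c)=n+3$ forces $n\ge 9$. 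So the small-$n$ cases, like the main case, require the partition machinery you did not build.
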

\begin{proof}

Let $\mathcal{A}=\{A_1,\dots,A_p\}$ and $\mathcal{B}=\{B_1,\dots,B_q\}$ be optimal ordered vertex partitions of $D$ and $D^c$ for a digrundy coloring, respectively. Suppose that $\mathcal{A}$ has $a_1$ sets of order one, $a_2$ sets of order two and $a_3$ sets of order at least three. Similarly, $\mathcal{B}$ has $b_1$ sets of order one, $b_2$ sets of order two and $b_3$ sets of order at least three. From the assumption, $dG(D)=a_1+a_2+a_3$, $dG(D^c)=b_1+b_2+b_3$ and the definitions of $a_i$ and $b_i$ we have $a_1+2a_2+3a_3\leq n$ and $b_1+2b_2+3b_3\leq n$. We can write 
\begin{equation}\label{eq4}
a_1+2a_2+3a_3+\epsilon_a= n
\end{equation}
and
\begin{equation}\label{eq5}
b_1+2b_2+3b_3+\epsilon_b= n,
\end{equation}
where $\epsilon_a,\epsilon_b\geq 0$ are the excess.

Consider the sets of order one of $\mathcal{A}$ and $\mathcal{B}$. We may suppose (eventually reorder) that they come last in the orderings. 
Since $K$ and $L$ contains the singular classes of $D$ and $D^c$ respectively,  where $K=\{v\in A_i\colon |A_i|=1\}$ spans a complete subdigraph in $D$ and $L=\{v\in B_j\in \mathcal{B}\colon |B_j|=1\}$ spans an independent set in $D$, thus $|K\cap L|\leq 1$. If $|K\cap L|=1$, then $dG(D)+dG(D^c)\leq n+1$  because if $\{x\}=K\cap L $, then $2(|\mathcal{A}|-1)\leq d^+_D(x)+d^-_D(x)$ and $2(|\mathcal{B}|-1)\leq d^+_{D^c}(x)+d^-_{D^c}(x)$ and $dG(D)+dG(D^c)\le |\mathcal{A}|+|\mathcal{B}|\leq  n+1<\frac{5n+5}{4}$. 

Assume $K\cap L=\emptyset$, we prove that  $dG(D)+dG(D^c)\le |\mathcal{A}|+|\mathcal{B}|\le n+1<\frac{5n+5}{4}$. Let $\alpha_2$ and $\alpha_3$ be the number of sets in $\mathcal{A}$ contained in $L$ with  2- and at least 3-elements  respectively, $\alpha = \alpha_2+\alpha_3$, and define similarly $\beta_2$ and $\beta_3$ for $\mathcal{B}$. Since $L$ (respectively $K$) is an independent set in $D$ (respectively in $D^c$), it follows that
$$\alpha, \beta \leq 1.$$
Classify the $2$-element sets into three groups. There are $a_{2,t}$ of them meeting $L$ in exactly $t$ elements. Define $b_{2,t}$ analogously (i.e., the number of $2$-sets of $\mathcal{B}$ meeting $K$ in $t$ vertices). We have
\[a_{2,2}=\alpha_2,\qquad a_2=a_{2,0}+a_{2,1}+a_{2,2},\qquad b_{2,2}=\beta_2,\qquad b_2=b_{2,0}+b_{2,1}+b_{2,2}.\]

All but $\alpha$ parts of $\mathcal{A}$ have points outside $L$, and at least $a_{2,0}$ of them have two or more. We get that $|\mathcal{A}|-\alpha + a_{2,0}\leq n-|L|$. Again, write this (and its analogue for $\mathcal{B}$, $|\mathcal{B}|-\beta + b_{2,0}\leq n-|K|$) in the following form:
\begin{equation}\label{eq6}
a_1 + a_2 + a_3 + a_{2,0} + b_1 = n + \alpha - \epsilon_\alpha
\end{equation}
\begin{equation}\label{eq7}
b_1 + b_2 + b_3 + b_{2,0} + a_1 = n + \beta - \epsilon_\beta
\end{equation}

Consider an $a_{2,1}$ two-element $\mathcal{A}$-set, say $\{v,v'\}$, that intersect $L$ in exactly one vertex, say $v\in L$ and $v'\not\in L$. Denote the set of these vertices $v\in L$ by $L_1$, and the set of vertices $v'\not\in L$ by $S$. Similarly, $K_1\colon = \{u \in K\colon \exists u' \not\in K$ such that $\{u,u'\}\in \mathcal{B}\}$, and $T\colon =\{u' \not\in K\colon \exists u \in K$ such that $\{u,u'\}\in \mathcal{B}\}$. We have
\[|S|=a_{2,1},\qquad S\cap (K \cup L) = \emptyset,\qquad |T| = b_{2,1},\qquad T\cap (K \cup L) = \emptyset.\]

\begin{claim}\label{claim6} $|S\cap T |\leq 1.$
\begin{proof}
The sets of order one of an optimal ordered partition can be taken such that they have the greatest color labels, otherwise, we can reorder them in such a way.

Assume, on the contrary, that $x_1,x_2\in S \cap T$. This means that there are $u_1,u_2\in L$ such that the two-element parts $\{u_1,x_1\}$ and $\{u_2,x_2\}$ belong to $\mathcal{A}$, and there are $v_1,v_2\in K$ such that $\{v_1,x_1\}$ and $\{v_2,x_2\}$ belong to $\mathcal{B}$. By definition we already know the status of the pairs, namely $v_1v_2,v_2v_1\in F(D)$ and $u_1u_2,u_2u_1\notin F(D)$. Let $<_{\mathcal{A}}$ denote position of the elements in the  ordering $\mathcal{A}$. 
By symmetry (between $\{u_1,x_1\}$ and $\{u_2,x_2\}$), we may suppose that the order of these classes of the partition ${\mathcal{A}}$  is
\[\{u_1,x_1\}<_{\mathcal{A}}\{u_2,x_2\}<_{\mathcal{A}}\{v_1\}<_{\mathcal{A}}\{v_2\}.\]
Then $\{u_1,x_1\}$ and $u_2$ implies $x_1u_2,u_2x_1\in F(D)$, i.e., $x_1u_2,u_2x_1\notin F(D^c)$. Therefore, $\{v_1,x_1\}$ and $u_2$ implies $v_1u_2,u_2v_1\in F(D^c)$ since $\{v_1,x_1\}<_{\mathcal{B}} \{u_2\}$, see Figure \ref{fig1} a).

Note that, $\{u_2,x_2\}$ and $v_1$ implies $x_2v_1,v_1x_2\in F(D)$, i.e., $x_2v_1,v_1x_2\notin F(D^c)$. This implies that $\{v_1,x_1\}<_{\mathcal{B}}\{v_2,x_2\}$, otherwise, $v_1$ violate the greedy requirement of the partition $\mathcal{B}$. Then, $\{v_1,x_1\}$ and $v_2$ implies $x_1v_2,v_2x_1\in F(D^c)$ and $\{v_1,x_1\}$ and $x_2$ implies $x_1x_2,x_2x_1\in F(D^c)$, i.e., $x_1v_2,v_2x_1\notin F(D)$ and  $x_1x_2,x_2x_1\notin F(D)$, see Figure \ref{fig1} b).

Finally, $\{u_1,x_1\}$ and $x_2$ implies $x_2u_1,u_1x_2\in F(D)$, i.e., $x_2u_1,u_1x_2\notin F(D^c)$. On one hand, $\{u_1,x_1\}$ and $v_2$ implies $v_2u_1,u_1v_2\in F(D)$. On the other hand, $\{v_2,x_2\}$ and $u_1$ implies $v_2u_1,u_1v_2\in F(D^c)$ which is impossible, and the lemma follows, see Figure \ref{fig1} c).

\begin{figure}[ht!]
\begin{center}
\includegraphics{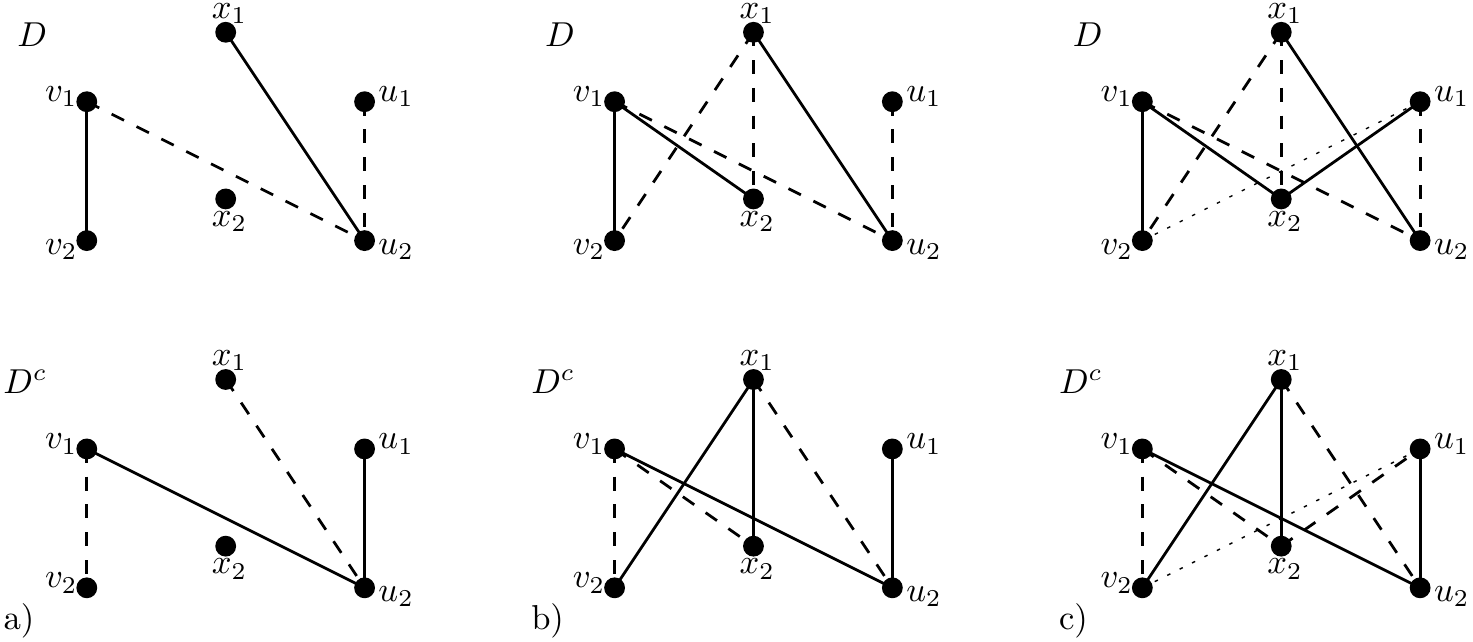}
\caption{Proof of Claim \ref{claim6}. Digons are represented with edges and dashed edges represent edges in the complement.}\label{fig1}
\end{center}
\end{figure}

\end{proof}
\end{claim}

Claim \ref{claim6} shows that the sets $K$, $L$, $S$, $T$ are almost disjoint. Let $\gamma = |S\cap T|$  and denote by $n-\epsilon_\gamma$ the order of the union of these four sets. By Claim \ref{claim6}, $\gamma \le1$. We obtain
\begin{equation}\label{eq8}
|L \cup K| + |S\cup T | = a_1 + b_1 + a_{2,1} + b_{2,1} - \gamma = n - \epsilon_\gamma.
\end{equation}

Adding the five equalities (\ref{eq4})-(\ref{eq8}) and denoting $\epsilon = \epsilon_a + \epsilon_b + \epsilon_\alpha + \epsilon_\beta + \epsilon_\gamma$ we get
\[4(a_1 + a_2 + a_3 + b_1 + b_2 + b_3 ) = 5n + (\alpha + \beta + \gamma) + (\alpha_2 + \beta_2 ) - \epsilon = 5n + s\]
That is, when $K\cap L=\emptyset$ we have that  $dG(D)+dG(D^c)\leq \frac{5n+s}{4}$ for some integer $s$. Since $\alpha, \beta, \gamma\le1$ it follows that $s\leq 5$.

In both cases, $K\cap L\neq\emptyset$ and $K\cap L=\emptyset$, $dG(D)+dG(D^c)\leq \frac{5n+s}{4}$ for some $s\le5$. The following claim is essential in order to prove that $s\le4$.

	
\begin{claim}\label{claim7} If $\alpha = 1$ then
\begin{enumerate}
\item[(1)] there is no class $B \in \mathcal{B}$ with $B \subset S$;
\item[(2)] there is no class $B \in \mathcal{B}$, $B\subset S \cup K$ with $|B \cap S| = |B|-1$;
\item[(3)] there is no class $A_i \in \mathcal{A}$, $A_i \subset L \cup T$ with $|A_i \cap T | = 1$.
\item[(4)] $\gamma = 0$.
\end{enumerate}
\begin{proof}
Indeed, $\alpha = 1$ gives an $A_j\subseteq L$ belonging to $\mathcal{A}$. The first two statements are based on the fact that $D[S,A_j]$ is a complete bipartite digraph. Let $w\in A_j$, $y\in S$. Then there is a $u\in L$ such that $\{y,u\}\in \mathcal{A}$. Since $L$ is independent, the greedy requirement between $u$ and $A_j$ implies that $u$ (and its class $\{y,u\}$) precedes $A_j$ in $\mathcal{A}$. Then there arcs between $w$ and the class $\{y,u\}$, it should be $wy$ and $yw$, and thus  $D[S,A_j]$ is a complete bipartite digraph.

In order to prove (1) suppose, for a contradiction, that $B\subset S$ for $B\in \mathcal{B}$. Take any element $w \in A_j$. This implies $w\in L$ which by the definition of $L$ gives $\{w\}\in \mathcal{B}$, too, and thus there must be a non-arc between $w$ and $B$, a contradiction.

To prove (2) suppose, on the contrary, that $B\in \mathcal{B}$, $B\subseteq K\cup S$, and $B\cap K = \{v\}$. Since $\{w\}\in\mathcal{B}$ for all $w\in A_j$, there is a non-arc from $w$ to $B$, therefore $vw$ and $wv$ are arcs. Consider $\{v\}\in \mathcal{A}$ and $A_j$. There should be arcs $vw_1w_2v$, $w_1,w_2\in A_j$, a contradiction.

To prove (3) suppose $A_i \cap T = \{x\}$ and $(A_i\setminus \{x\})\subseteq L$. Notice that $i<j$ otherwise $u\in A_i\cap L$ would violate the greedy requirement between $u$ and $A_j$ in $\mathcal{A}$. Then there is a digon from $w\in A_j$ to $x$. By definition of $T$ there is a $v\in K$ such that $\{v,x\}\in \mathcal{B}$. Consider $\{w\}$ and $\{v,x\}$ in $\mathcal{B}$, $vw,wv \in F(D^c)$ follows (for every $w\in A_j$). Then the greedy requirement on $D$ is violated between the classs $A_j$ and $\{v\}\in\mathcal{A}$.

Note that (4) is a particular case of (3).
\end{proof}
\end{claim}

Similar to Claim \ref{claim7}, if $\beta=1$, then $\gamma = 0$. Conversely, $\gamma = 1$ implies $\alpha = \beta = 0$, hence $s \leq 1$ and we are done. From now on, we suppose that $\gamma = 0$, that is, $|S \cap T| = 0$, and then $s \leq 4$ and $dG(D)+dG(D^c)\leq \frac{5n+4}{4}.$ 

In the sequel, we will prove that for $n\geq 10$, $s\le2$ and in this case we have that $dG(D)+dG(D^c)\leq \frac{5n+2}{4}$. 

Since $s\leq 2(\alpha + \beta)-\epsilon$, if $\alpha + \beta \leq 1$ or $\epsilon \geq 2$ it follows that $s \leq 2$.

Assume that  $\alpha = \beta = 1$ and  $ \epsilon \leq 1$. In this case there exists a class $A'\in \mathcal{A}$, $A'\subseteq L$ (naturally, it is disjoint from $L_1$), and there exists a class $B'\in \mathcal{B}$, $B'\subseteq K$ (and $B'\cap K_1=\emptyset$). 
We claim that there is no class $A\in\mathcal{A}$ contained in $L\cup T$, other than $A'$. 
Claim \ref{claim7} implies that such a class $A$ intersects  both $L$ and $T$ in at least two vertices. If such an $A$ exists then $\epsilon_a \geq 1$ in Equation (\ref{eq4}). Also, $A$ should be counted twice on the left-hand-side of Equation (\ref{eq6}), implying $\epsilon_\alpha \geq 1$. Contradicting  $\epsilon \leq 1$.
Similarly, there is no second $B$-class in $K\cup S$.  

Let $W=V(D)\setminus (K \cup L \cup S \cup T )$, $|W|=\epsilon_\gamma$.
Consider the case $W=\emptyset$. Then there is no $A$-class covering the points of $T$, so $T$ should be empty. Similarly, $S=\emptyset$ follows. Then $V(D)= K\cup I$, hence $dG(D)+dG(D^c)\leq n+2$ and we are done.

Let $W\not= \emptyset$, since $ \epsilon \leq 1$ it follows that $|W| = 1$ and $\epsilon_a = \epsilon_b = \epsilon_\alpha = \epsilon_\beta = 0$. Let $A''$ be the $A$-class covering $W$. There are no more $\mathcal{A}$-classs in $T\cup (L \setminus L_1 ) \cup W$ so $|\mathcal{A}| = |K| + |S| + 2$. Similarly, $W\in B''\in \mathcal{B}$ and $|B| = |L| + |T| + 2$ giving $dG(D)+dG(D^c)\leq n+3$. Since $n + 3 \leq (5n + 2)/4$ we are done for $n \geq 10$.
\\

To finish, let $n\leq 9$. For these cases $dG(D)+dG(D^c)\leq \frac{5n+4}{4}$, implies $dG(D)+dG(D^c)\leq n+3$. We claim that if $dG(D)+dG(D^c)= n+3$ then $n=9$.
To prove the previous take the addition of the following seven pairwise disjoint sets: 
\[n \geq |A'|+|B'|+|A''\setminus E|+|K_1|+|B''\setminus E|+|L_1|+|E|.\]
Here $|A'|\geq 2$, $|B'|\geq 2$, $|E|=1$. It is easy to see that $|A''\setminus E| + |K_1 |\geq 2$ and $|B''\setminus E|+|L_1|\geq 2$. Indeed, $K_1 = \emptyset$ implies $T=\emptyset$ and $A''\subseteq L \cup E$. Since $E\notin S$ we get $|A''|\geq 3$.

\end{proof}


\section*{Acknowledgments}
%
%
%
Partially supported by PAPIIT-M\'exico: IN108121, CONACyT-M\'exico 282280, A1-S-12891, 47510664.

\end{document}